\newtheorem{theorem}{Theorem}[section]
\newtheorem{corollary}[theorem]{Corollary}
\theoremstyle{definition}
\newtheorem{example}[theorem]{Example}
\theoremstyle{remark}
\numberwithin{equation}{section}
\def\N{\mathbb{N}}
\def\Z{\mathbb{Z}}
\def\hN{{{}^*\N}}
\def\hA{{}^*A}
\def\hf{{}^*f}
\def\U{\mathcal{U}}
\def\ueq{{\,{\sim}_{{}_{\!\!\!\!\! u}}\;}}
\newcommand{\divides}{\,|\,}
\newcommand{\notdivides}{\nmid}
\begin{document}

\title[Fermat-like equations that are not partition regular]
{Fermat-like equations
\\
that are not partition regular}

\author{Mauro Di Nasso}
\address{Dipartimento di Matematica,
Universit\`{a} di Pisa, Italy.} \email{mauro.di.nasso@unipi.it}

\author{Maria Riggio}
\address{Dipartimento di Matematica,
Universit\`{a} di Pisa, Italy.} \email{riggio@student.dm.unipi.it}

\subjclass[2000]
{03H05; 03E05, 05D10, 11D04.}

\keywords{Nonstandard analysis, Ramsey theory, Diophantine equations}


\maketitle

\begin{abstract}
By means of elementary conditions on coefficients,
we isolate a large class of Fermat-like Diophantine
equations that are not partition regular, the simplest
examples being $x^n+y^m=z^k$ with $k\notin\{n,m\}$.
\end{abstract}

\bigskip
\section{Introduction}

By a \emph{Fermat-like equation} we mean a Diophantine
equation that generalizes equations $x^n+y^n=z^n$
to the form 
$$\sum_{i=1}^s a_i x_i^n+\sum_{j=1}^t b_j y_j^m\ =\ 
\sum_{k=1}^u c_k z_k^r.$$

This is a particularly relevant class of 
equations; especially in their simplest cases $ax^n+by^m=c z^r$,
they have been extensively studied in number theory,
leading to several important open questions that seem still far to be solved.
(See, \emph{e.g.}, \cite{BCDY15} and references therein.)
Here we study those equations from the point of view
of \emph{Ramsey theory}. The central notion is that of
\emph{partition regularity}, that can be seen
as the existence of ``diffuse'' solutions, in the sense
that no matter how one splits the natural numbers into
finitely many pieces, solutions are found in one of the pieces.

\smallskip
\noindent
\textbf{Definition.}
An equation $f(x_1,\ldots,x_n)=0$ is
\emph{partition regular} on $\N$ (PR for short)
if for every finite coloring (partition) $\N=C_1\cup\ldots\cup C_r$
there exist \emph{monochromatic} $a_1,\ldots,a_n\in C_i$
that are a solution $f(a_1,\ldots,a_n)=0$.

\smallskip
The problem of partition regularity of
the linear diophantine equations $c_1x_1+\ldots+c_n x_n=0$
was completely solved by R. Rado \cite{R33} in 1933.
Precisely, he characterized the linear equations that are PR
by means of a really simple condition on the coefficients,
namely $\sum_{i\in I}c_i=0$ for some nonempty 
$I\subseteq\{1,\ldots,n\}$.
While Rado's Theorem has been widely extended
in various directions so as to
also include infinite systems of linear equations
(see, \emph{e.g.}, the recent papers \cite{GHL14,BHLS15,HLS15}
and references therein),
very little is known about nonlinear equations.
On this last topic, relevant progresses have been recently
made by P. Csikv\'ari, K. Gyarmati and A. S\'ark\"{o}zy.
In the paper \cite{CGS12} appeared in 2012, they
proved the PR of 
$xy=z^2$ and of $xy+xz=yz$; and the \emph{non}-PR
of $x+y=z^2$. Moreover, they proved the PR of
Fermat's equations $x^n+y^n=z^n$ (with $xyz\ne 0$)
in all sufficiently large finite fields $\mathbb{F}_p$.
An important result on the interplay
between additive and multiplicative structure on $\N$
was proved by N. Hindman \cite{H11} in 2011,
who showed the PR of
equations $\sum_{i=1}^n x_i=\prod_{i=1}^n y_i$.
To our knowledge, the last contributions
in this area were given by L. Luperi Baglini
by using nonstandard analysis.
In the paper \cite{LB14} of 2014, he
succeeded in generalizing
Hindman's result in several directions, by 
proving the PR of many
nonlinear equations (a simple example is $x_1y_1+x_2 y_1y_2-x_3=0$).

At the foot of the paper \cite{CGS12}, it is left as an open 
problem the PR of all Fermat-like
equations $x^n+y^m=z^k$ on
sufficiently large finite fields $\mathbb{F}_p$.
In this paper we focus on the related
problem of partition regularity on $\N$ of
a large class of generalized Fermat-like equations,
and we isolate necessary conditions for their PR.
In particular, as the simplest cases, we show that
equations $x^n+y^m=z^k$ where $k\notin\{n,m\}$
are not PR on $\N$. 

\smallskip
Our proofs make use of the \emph{hypernatural numbers} $\hN$ of 
\emph{nonstandard analysis}.
Essentially, the nonstandard framework provides a simplified
formalism to the use of ultrafilters, making it closer to the familiar
intuition of natural numbers. The results presented in this paper 
originated from the Master thesis \cite{Ri16}.

\smallskip
Let us fix our notation.
With $\N$ we denote the set of positive integers.
For $n\in\N$, we write $[n]$ for $\{1,\ldots,n\}$. 
We use $p$ to denote a prime number, and write 
$p\divides a$ to mean $a\equiv 0\!\mod p$,
and $p\notdivides a$ to mean $a\not\equiv 0\!\mod p$.
(So, trivially, $p\divides 0$.)

\section{The results}

In this section we state our main theorems and 
show a few consequences.  
Proofs are found in the next section.

\begin{theorem}\label{main1}
Consider the diophantine equation
\begin{equation}\label{eq1}
\sum_{\ell=1}^h 
\left(\sum_{i=1}^{s_\ell}a_{\ell,i}\,x_{\ell,i}^{n_\ell}\right)\ =\ 0.
\end{equation}
where $n_1<\ldots<n_h$. Suppose there exists
a prime $p$ such that

\smallskip
\begin{enumerate}
\item
$\sum_{i\in\Gamma_\ell}a_{\ell,i}\not\equiv 0\mod p$ 
for every $\ell$ and for every nonempty 
$\Gamma_\ell\subseteq[s_\ell]$\,;

\smallskip
\item
$\sum_{\ell=1}^h 
\left(\sum_{i=1}^{n_\ell}a_{\ell,i}\right)\rho^{n_\ell}\not\equiv 0\mod p$
for every $\rho\not\equiv 0\mod p$.
\end{enumerate}

\smallskip
Then (\ref{eq1}) is not partition regular on $\N$,
except possibly for constant solutions.
\end{theorem}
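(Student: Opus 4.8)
The plan is to argue by contradiction through the nonstandard (ultrafilter) reformulation of partition regularity. If (\ref{eq1}) were partition regular, there would exist hypernatural witnesses $\alpha_{\ell,i}\in\hN$ that are pairwise $u$-equivalent, i.e.\ all generate one and the same ultrafilter on $\N$, so that $\alpha_{\ell,i}\ueq\alpha_{\ell',i'}$ for all indices, and that solve the transferred equation $\sum_\ell\sum_i a_{\ell,i}\,\alpha_{\ell,i}^{n_\ell}=0$. Setting aside the constant case, I would assume the $\alpha_{\ell,i}$ are not all equal. The whole argument then takes place $p$-adically: I compare the $p$-adic valuations of the individual summands and show that the summands of least valuation cannot cancel modulo $p$, contradicting the equation.

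First I would extract the structural consequences of $u$-equivalence. Partitioning $\N$ according to the first nonzero base-$p$ digit of $n$, i.e.\ into the $p-1$ classes $\{n: n/p^{v_p(n)}\equiv\rho\bmod p\}$, shows that all the $\alpha_{\ell,i}$ share one common leading digit $\rho\not\equiv 0\bmod p$; writing $\alpha_{\ell,i}=p^{w_{\ell,i}}u_{\ell,i}$ with $p\notdivides u_{\ell,i}$, this says $u_{\ell,i}\equiv\rho\bmod p$ for every $\ell,i$. Applying $u$-equivalence instead to the level sets of $v_p$ forces the valuations $w_{\ell,i}$ to be mutually $u$-equivalent as hypernaturals; in particular, whenever they are finite they must all coincide.

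Next, let $W=\min_{\ell,i} n_\ell w_{\ell,i}$ be the (internal) minimal valuation of a summand, noting $v_p(a_{\ell,i})=0$ by condition (1) applied to singletons. Dividing the equation by $p^W$ and reducing modulo $p$ annihilates every summand of valuation $>W$ and leaves $\sum_{(\ell,i)\in S} a_{\ell,i}\,\rho^{n_\ell}\equiv 0\bmod p$, where $S$ is the set of summands attaining the minimum. Grouping $S$ by level yields nonempty $\Gamma_\ell\subseteq[s_\ell]$ for $\ell$ ranging over some $L\subseteq[h]$, and the congruence $\sum_{\ell\in L}\big(\sum_{i\in\Gamma_\ell}a_{\ell,i}\big)\rho^{n_\ell}\equiv 0\bmod p$. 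When a single level attains the minimum, so that $|L|=1$, condition (1) on the subset $\Gamma_\ell$ delivers the contradiction; when every valuation is $0$ and $S$ is everything, so that $L=[h]$ and each $\Gamma_\ell=[s_\ell]$, condition (2) evaluated at this very $\rho$ delivers it.

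The main obstacle is the remaining possibility that the minimum is attained simultaneously across several levels, $|L|\ge 2$, which can only occur in the regime of infinite valuations: for $\ell<\ell'$ both in $L$ one needs $n_\ell w_\ell=n_{\ell'}w_{\ell'}=W$ with $w_\ell\ne w_{\ell'}$, impossible for finite valuations since mutually $u$-equivalent finite valuations coincide. I would therefore concentrate the work on showing that such a cross-level coincidence of minimal valuations cannot be realized by a genuinely $u$-equivalent family, analyzing the ultrafilter induced on the valuations $w_{\ell,i}$ and the forced rational ratios $w_\ell/w_{\ell'}=n_{\ell'}/n_\ell$, so that only the two benign configurations survive and conditions (1)--(2) close the argument. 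This valuation bookkeeping, rather than the elementary mod-$p$ computation, is where I expect the real difficulty to lie.
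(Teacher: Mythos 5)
Your overall strategy is the right one and, in structure, matches the paper's: pass to $u$-equivalent hypernatural witnesses, compare $p$-adic valuations of the individual summands, and invoke condition (1) when a single level realizes the minimal valuation and condition (2) when all valuations vanish. Your decomposition $\alpha_{\ell,i}=p^{w_{\ell,i}}u_{\ell,i}$ differs superficially from the paper's $\xi_{\ell,i}=p^{\nu_{\ell,i}}\zeta_{\ell,i}+\rho$ (the paper first uses condition (2) to force the residue $\rho$ to be $0$, after which the two decompositions agree), but your case analysis is equivalent and would be complete --- except for the one step you explicitly defer.

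That deferred step is a genuine gap, and it is exactly where the only nontrivial idea of the proof sits. You must rule out $n_\ell w_\ell=n_{\ell'}w_{\ell'}$ for $\ell\ne\ell'$ when the valuations are infinite, and neither the coincidence of finite $u$-equivalent numbers nor the ``$u$-equivalent distinct numbers differ by an infinite amount'' property will do it, since $(n_{\ell'}-n_\ell)w_\ell$ is perfectly allowed to be infinite. What closes it is property (d) of $u$-equivalence, the nonstandard form of the ultrafilter fixed-point fact ``$f(\U)=\U\Rightarrow\{n\mid f(n)=n\}\in\U$'': from $w_\ell\ueq w_{\ell'}$ one gets $n_{\ell'}w_\ell\ueq n_{\ell'}w_{\ell'}=n_\ell w_\ell$, and since $n_{\ell'}w_\ell=\hf(n_\ell w_\ell)$ for the standard map $f$ sending $n_\ell x\mapsto n_{\ell'}x$ (extended arbitrarily off the multiples of $n_\ell$), property (d) forces $n_{\ell'}w_\ell=n_\ell w_\ell$, hence $w_\ell=0$, contradicting the infinitude of $w_\ell$ (in the paper's setup, contradicting $\nu_\ell>0$). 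Your proposed route via ``forced rational ratios $w_\ell/w_{\ell'}=n_{\ell'}/n_\ell$'' does not obviously lead anywhere without this fixed-point property, so as written the proof is incomplete at its crucial point; once that one line is inserted, the minimizing level is unique, $|L|=1$ or the all-zero case are the only survivors, and your appeal to conditions (1) and (2) finishes the argument as in the paper.
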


\begin{example}
\emph{The equation $x^3-y^2+2z=0$ is not PR.
Indeed, the theorem above applies by taking $p=3$.}
\end{example}

\smallskip
The previous example easily generalizes as follows.
Let $P(\rho)=\sum_{j=0}^h b_j\rho^{n_j}$ be a polynomial
where $0=n_0<n_1<\ldots<n_h$. If there is a prime
$p$ that does not divide any of the coefficients $b_j$,
and such that $P$ has no solutions modulo $p$,
then the equation $\sum_{j=0}^h b_j x_j^{n_j+k}=0$
is not PR for every $k\ge 1$.
 
\smallskip
\begin{example}
\emph{The equation $\sum_{\ell=1}^h a_iz^n=z^{n+3}$
is not PR whenever all coefficients $|a_i|<7$ and
$\sum_{\ell=1}^h a_i\not\equiv \pm 1\mod 7$.
Indeed, in this case
the theorem above applies with $p=7$
because $(\sum_{\ell=1}^h a_i)\rho^n\equiv\rho^{n+3}\mod 7$
has only the trivial solution $\rho\equiv0$.}
\end{example}

\begin{theorem}\label{main2}
Consider the diophantine equation
\begin{equation}\label{eq2}
\sum_{\ell=1}^h 
\left(\sum_{i=1}^{s_\ell}a_{\ell,i}\,x_{\ell,i}^{n_\ell}\right)\ =\ 0.
\end{equation}
where $n_1<\ldots<n_h$, and where

\smallskip
\begin{enumerate}
\item
$\sum_{i\in\Gamma_\ell}a_{\ell,i}\ne 0$ for every $\ell$
and for every nonempty 
$\Gamma_\ell\subseteq[s_\ell]$.
\end{enumerate}

\smallskip
\noindent
Suppose there exists a prime number $p$ that satisfies
the following:

\smallskip
\begin{enumerate}
\item[(2)]
There exists $\ell'$ such that
$p\notdivides\sum_{i=1}^{n_{\ell'}}a_{\ell'\!,i}$ and
$p\divides\sum_{i=1}^{n_\ell}a_{\ell,i}$ for $\ell\ne\ell'$;

\smallskip
\item[(3)]
$p^{n_{\ell'}-n_\ell}\notdivides\sum_{i\in\Gamma_\ell}a_{\ell,i}$
for every $\ell'>\ell$ and for every nonempty 
$\Gamma_\ell\subseteq[s_\ell]$.
\end{enumerate}

\smallskip
Then (\ref{eq2}) is not partition regular on $\N$,
except possibly for constant solutions.
\end{theorem}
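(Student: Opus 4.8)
The plan is to argue by contradiction through the nonstandard characterization of partition regularity that underlies the paper: equation (\ref{eq2}) is partition regular on $\N$ if and only if there exist hypernatural numbers $\xi_{\ell,i}\in\hN$, generating a nonprincipal ultrafilter, that are pairwise $u$-equivalent ($\xi_{\ell,i}\ueq\xi_{\ell',i'}$ for all indices) and satisfy $\sum_{\ell}\sum_i a_{\ell,i}\,\xi_{\ell,i}^{\,n_\ell}=0$. A principal (constant) witness corresponds precisely to the excluded constant solutions, so it suffices to show that no such diffuse witness exists. Assuming one does, I use that the residue classes modulo any fixed $p^m$ form a finite coloring of $\N$: $u$-equivalent hypernaturals then share every residue mod $p^m$, and in particular all $\xi_{\ell,i}$ have a common residue $\rho$ modulo $p$. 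The proof splits according to whether $p\divides\rho$ or not.

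If $p\notdivides\rho$, I reduce the equation modulo $p$. From $\xi_{\ell,i}\equiv\rho$ one gets $\xi_{\ell,i}^{\,n_\ell}\equiv\rho^{n_\ell}$, so (\ref{eq2}) forces $\sum_{\ell} S_\ell\,\rho^{n_\ell}\equiv 0\bmod p$, where $S_\ell:=\sum_i a_{\ell,i}$. By hypothesis (2) every $S_\ell$ with $\ell\ne\ell'$ is divisible by $p$, so only the term $\ell'$ survives and $S_{\ell'}\rho^{n_{\ell'}}\equiv 0\bmod p$; since both $\rho$ and $S_{\ell'}$ are units mod $p$, this is impossible. This disposes of the first case using only condition (2).

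The substantial case is $p\divides\rho$, where every variable is divisible by $p$. Here I factor out $p$-adic valuations: write $\xi_{\ell,i}=p^{\,w_{\ell,i}}u_{\ell,i}$ with $p\notdivides u_{\ell,i}$, the unit parts again sharing a common residue $\sigma\not\equiv 0$ modulo every $p^m$. The monomial $a_{\ell,i}\xi_{\ell,i}^{\,n_\ell}$ has $p$-valuation $v_p(a_{\ell,i})+n_\ell w_{\ell,i}$; grouping into $\Gamma_\ell$ the variables of block $\ell$ that attain the least valuation $w_\ell$, the leading $p$-adic contribution of that block carries valuation $n_\ell w_\ell+v_p\bigl(\sum_{i\in\Gamma_\ell}a_{\ell,i}\bigr)$. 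Condition (1) guarantees $\sum_{i\in\Gamma_\ell}a_{\ell,i}\ne 0$, so this valuation is finite, while condition (3) bounds $v_p\bigl(\sum_{i\in\Gamma_\ell}a_{\ell,i}\bigr)<n_{\ell'}-n_\ell$ for every $\ell<\ell'$; together with condition (2), which makes $\ell'$ the unique block whose full coefficient sum is a $p$-unit, these inequalities control the relative ordering of the block valuations and isolate the leading $p$-adic level of the sum. It then remains to show that the terms sitting at that leading level cannot cancel, contradicting $\sum a_{\ell,i}\xi_{\ell,i}^{\,n_\ell}=0$.

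Two points will require care, and I expect the second to be the main obstacle. First, for a diffuse witness the common valuation need not be a fixed standard integer: it may be an infinite hypernatural, and then the variables inside a single block can carry different valuations. This is precisely why conditions (1) and (3) are imposed for every nonempty $\Gamma_\ell$ rather than only for the full sums $S_\ell$, so that the minimal-valuation sub-block of each block is controlled whatever it turns out to be. Second, and more seriously, one must rule out cancellation among the leading $p$-adic terms. Congruence of the variables modulo all powers of $p$ is by itself compatible with such cancellation---the one-variable polynomial $\sum_\ell S_\ell t^{n_\ell}$ may well possess roots of positive $p$-adic valuation---so the contradiction cannot be extracted from congruences alone. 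The non-cancellation must instead be forced by the full rigidity of $u$-equivalence, through the finer relation $\ueqs$, which pins the nonstandard witness down far more tightly than the $p$-adic data; carrying this out is the technical heart of the argument.
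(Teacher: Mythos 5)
Your first case ($p\notdivides\rho$) and your overall framework (pass to $p$-adic valuations, isolate a leading level) coincide with the paper's, but the proof is not actually completed: you explicitly defer ``ruling out cancellation among the leading $p$-adic terms'' as the technical heart of the argument, and you do not supply the mechanism that makes it work. That mechanism is concrete and uses only the listed properties (a)--(d) of $u$-equivalence. First, the quantities $n_\ell\nu_\ell$ (where $\nu_\ell=\min_i\nu_{\ell,i}$) are pairwise distinct: if $n_\ell\nu_\ell=n_{\ell'}\nu_{\ell'}$ then, since $\nu_\ell\ueq\nu_{\ell'}$, one gets $n_\ell\nu_\ell\ueq n_{\ell'}\nu_\ell$, and the fixed-point property (d) forces $n_\ell\nu_\ell=n_{\ell'}\nu_\ell$, hence $\nu_\ell=0$, contradicting that the $\xi$'s are infinite. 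So there is a \emph{unique} block $\ell^*$ attaining $\tau=\min_\ell n_\ell\nu_\ell$, which is what rules out cancellation between blocks at the leading level. Writing $\sum_{i\in\Gamma_{\ell^*}}a_{\ell^*\!,i}=p^{d^*}N$ with $p\notdivides N$ (here $d^*$ is finite by hypothesis (1)), one then works modulo $p^{\tau+d^*+1}$: the $\Gamma_{\ell^*}$-terms contribute $p^{\tau+d^*}NR^{n_{\ell^*}}\not\equiv 0$, while every other term has exponent $n_\ell\nu_{\ell,i}-\tau$ that is either infinite (by property (c), $u$-equivalent unequal numbers differ by an infinite amount; this covers $i\notin\Gamma_{\ell^*}$ and the generic $\ell\ne\ell^*$) or finite, in which case property (d) again forces all $\nu_{\ell,i}$ to equal a single standard $\nu\ge1$, forces $\ell>\ell^*$, and hypothesis (3) applied to $\Gamma_{\ell^*}$ gives $(n_\ell-n_{\ell^*})\nu\ge n_\ell-n_{\ell^*}>d^*$. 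Either way all remaining terms are divisible by $p^{\tau+d^*+1}$, so the sum cannot vanish.

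Two of your framing remarks are also off target. Your worry that ``the contradiction cannot be extracted from congruences alone'' is misplaced: the paper's contradiction \emph{is} a single congruence modulo $p^{\tau+d^*+1}$; what is needed beyond congruences is only the rigidity encoded in (c) and (d) (uniqueness of the minimal level, and the infinite-or-standard dichotomy for differences of $u$-equivalent numbers), not any finer relation such as the $\approx_u$ you invoke, which the paper never uses. Likewise, hypothesis (3) does not ``control the relative ordering of the block valuations'' in general; it is used only in the single degenerate subcase where all valuations collapse to one standard integer $\nu$. As written, your argument stops exactly where the proof begins.
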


\begin{corollary}\label{cor1}
Consider the diophantine equation
\begin{equation}\label{eq3}
a x^n+b y^n=c z^m\,.
\end{equation}
Assume that one of the following conditions holds:

\smallskip
\begin{enumerate}
\item[(i)]
$n<m$ and there exists a prime $p$
such that $p\divides c$ but $p\notdivides a+b$\,;

\smallskip
\item[(ii)]
$n>m$ and there exists a prime $p$
such that $p\divides a+b$ but $p\notdivides c$\,;

\smallskip
\item[(iii)]
$n=m$ and $a+b\ne 0$ and $a,b,a+b\ne c$.
\end{enumerate}

\smallskip
Then
(\ref{eq3})
is not partition regular on $\N$,
except possibly for constant solutions.
\end{corollary}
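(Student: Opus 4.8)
The plan is to read all three statements off Theorem~\ref{main2}, applied to (\ref{eq3}) rewritten in the zero form $ax^n+by^n+(-c)z^m=0$ demanded by (\ref{eq2}). Theorem~\ref{main2}, rather than Theorem~\ref{main1}, is the right tool: in (i) and (ii) the prime $p$ is chosen to divide one of the coefficient sums, which would at once violate condition~(1) of Theorem~\ref{main1}. The guiding observation is that in each regime the distinguished index $\ell'$ of condition~(2) is forced to be the layer of \emph{smallest} exponent; since condition~(3) only constrains layers $\ell$ with $\ell<\ell'$, it then holds vacuously, leaving only conditions~(1) and~(2) to verify. Throughout I take $a,b,c$ nonzero, as is implicit for a genuine equation of this shape.

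First I would fix the layer decomposition. When $n\ne m$ there are two layers: a two-term layer $\{ax^n,by^n\}$ of exponent $n$, with coefficient sum $a+b$, and a one-term layer $\{-cz^m\}$ of exponent $m$, with coefficient sum $-c$; I relabel them so that $n_1<n_2$. When $n=m$ there is a single layer, with coefficients $a,b,-c$ and coefficient sum $a+b-c$.

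For case~(i), $n<m$, so the exponent-$n$ layer is $\ell=1$ and the exponent-$m$ layer is $\ell=2$. The hypotheses $p\notdivides a+b$ and $p\divides c$ say exactly that layer~$1$ has coefficient sum prime to $p$ while layer~$2$ has coefficient sum divisible by $p$, which is condition~(2) with $\ell'=1$. For case~(ii), $n>m$, the exponents are interchanged, so the one-term layer $\{-cz^m\}$ becomes $\ell=1$ and the two-term layer becomes $\ell=2$; now $p\notdivides c$ and $p\divides a+b$ again single out layer~$1$, so $\ell'=1$. In both cases $\ell'=1$ is minimal, hence condition~(3) is vacuous. Condition~(1) reduces to $a,b,a+b,c\ne0$: here $a,b,c\ne0$ by assumption, while $a+b\ne0$ is automatic in case~(i) (it follows from $p\notdivides a+b$) but must be read as an extra nondegeneracy assumption in case~(ii). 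Theorem~\ref{main2} then gives the conclusion.

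Case~(iii), $n=m$, is the only one asking for slightly more, since now $h=1$ and condition~(2) must be produced rather than read off. With a single layer condition~(3) is again vacuous; condition~(1) is the requirement that the seven nonempty subset sums $a$, $b$, $-c$, $a+b$, $a-c$, $b-c$, $a+b-c$ all be nonzero, which is precisely $a,b,c\ne0$ together with the hypotheses $a+b\ne0$ and $a,b,a+b\ne c$; and condition~(2) becomes the existence of a prime $p$ with $p\notdivides(a+b-c)$. Since $a+b\ne c$ makes $a+b-c$ a nonzero integer, and a nonzero integer has only finitely many prime divisors, such a $p$ certainly exists, so Theorem~\ref{main2} applies once more. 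The only genuinely delicate point, which I would check with care, is the layer ordering in cases~(i) and~(ii): one must confirm that the hypothesis always forces the minimal-exponent layer to be the distinguished one, so that the potentially restrictive divisibility condition~(3) never enters; the secondary subtlety is the tacit need for $a+b\ne0$ in case~(ii).
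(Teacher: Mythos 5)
Your treatment of cases (i) and (ii) coincides with the paper's: both apply Theorem \ref{main2} after observing that the hypotheses force the distinguished layer $\ell'$ of condition (2) to be the one of smallest exponent, so that condition (3) holds vacuously. You are in fact slightly more careful than the paper in flagging that $a+b\ne 0$ must be tacitly assumed in case (ii) (there $p\divides a+b$ does not exclude $a+b=0$, and condition (1) of Theorem \ref{main2} genuinely needs it) and that $a,b,c\ne 0$ is implicit throughout. For case (iii), however, you take a genuinely different route. The paper does not invoke Theorem \ref{main2} there at all: it shows that partition regularity of $ax^n+by^n=cz^n$ would transfer to the linear equation $ax+by=cz$ by pulling the coloring back along $k\mapsto k^n$, and then derives the contradiction from Rado's theorem, whose conditions $a+b=0$, $a=c$, $b=c$, $a+b=c$ are exactly those excluded by (iii). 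You instead apply Theorem \ref{main2} with a single layer $h=1$: condition (1) becomes the nonvanishing of the seven nonempty subset sums of $\{a,b,-c\}$, which is precisely (iii) together with $a,b,c\ne 0$; condition (2) asks for a prime not dividing $a+b-c\ne 0$, which exists; condition (3) is again vacuous. Both arguments are correct. The paper's reduction is shorter and makes visible that (iii) is exactly Rado's condition for the linearized equation, while your route is uniform across all three cases and stays entirely inside the machinery of Theorem \ref{main2}, at the cost of having to check that the theorem and its proof degenerate correctly when $h=1$ (they do).
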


\begin{proof}
Notice that condition (3) in the previous theorem
is trivially satisfied when $p\divides\sum_{i=1}^{s_h} a_{h,i}$ and 
$p\notdivides\sum_{i=1}^{s_\ell}a_{\ell,i}$ for $\ell<h$.
Then (i) and (ii) directly follow.
As for (iii), notice that if equation (\ref{eq3}) was partition regular
when $n=m$, then also $ax+by=cz$ would be
partition regular. Indeed, given a finite coloring
$\N=C_1\cup\ldots\cup C_r$, one picks a
monochromatic solution $a,b,c$ of (\ref{eq3})
with respect to the coloring $\N=C'_1\cup\ldots\cup C'_r$ where
$C'_j=\{n\mid n^2\in C_j\}$, and obtain the
monochromatic solution $a^2,b^2,c^2$.
But then, by Rado's Theorem on linear equations,
we would have $a+b=0$ or $a=c$ or $b=c$ or $a+b=c$.
\end{proof}

\begin{example}
\emph{Equations $x+y=cz^k$ where $k>1$ and 
$c>1$ is odd are not PR. Indeed, the corollary above applies by taking 
any odd prime $p\divides c$.}
\end{example}

\begin{example}
\emph{Equations $x^n+y^n=cz$ where $n>1$ and $c>1$ is odd
are not PR. Indeed, the corollary above applies by taking 
the prime $p=2$.}
\end{example}

\smallskip
\begin{theorem}\label{main3}
Consider the diophantine equation
\begin{equation}\label{eq4}
\sum_{i=1}^{s}a_i x_i^n\ =\ y^{n+1}.
\end{equation}
Suppose that for every nonempty $\Gamma\subseteq[s]$ one has

\smallskip
\begin{enumerate}
\item
$\sum_{i\in\Gamma}a_i\ne 0$\,; 

\smallskip
\item
$\sum_{i=1}^s a_i+n\sum_{i\notin\Gamma}a_i\ne 0$.
\end{enumerate}

\smallskip
Then (\ref{eq4}) is not partition regular on $\N$,
except possibly for constant solutions.
\end{theorem}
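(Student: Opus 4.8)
The plan is to argue by contraposition through the \emph{nonstandard characterization of partition regularity} that underlies the whole paper: equation (\ref{eq4}) is PR on $\N$ precisely when it admits a \emph{non-constant} solution $\xi_1,\ldots,\xi_s,\eta$, lying in a suitable iterated hyperextension (e.g.\ within $\hhN$), all of which are pairwise u-equivalent, $\xi_1\ueq\cdots\ueq\xi_s\ueq\eta$. So I would assume such a solution of $\sum_{i=1}^s a_i\xi_i^n=\eta^{n+1}$ with the $\xi_i,\eta$ not all equal, and aim to contradict hypotheses (1)--(2). The heart of the proof is then a two-term asymptotic analysis: I would stratify the hypernaturals $\xi_1,\ldots,\xi_s,\eta$ by order of magnitude, grouping into a common ``level'' those that are comparable (the finer relation $\ueqs$), so that across distinct levels one element is infinitely larger than another, and read the equation off level by level.

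\emph{Leading order.} Let $\Gamma\subseteq[s]$ collect the indices $i$ for which $\xi_i$ attains the top level, of common order $\lambda$. Since $\eta^{n+1}$ grows as an $(n+1)$-st power, $\eta$ can lie neither at a scale where $\eta^{n+1}$ dominates $\lambda^n$ (the right-hand side would then outgrow the left, whose top contribution is only of order $\lambda^n$), nor at a scale where $\eta^{n+1}$ is dominated by $\lambda^n$ (for then the top coefficient $\sum_{i\in\Gamma}a_i$, which is nonzero by hypothesis (1), would leave $(\sum_{i\in\Gamma}a_i)\lambda^n$ with nothing to balance it). Hence $\eta$ sits at the unique intermediate scale for which $\eta^{n+1}$ and $\lambda^n$ agree in order, and the leading balance reads $\eta^{n+1}=(\sum_{i\in\Gamma}a_i)\lambda^n(1+o(1))$, again using (1).

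\emph{Subleading order.} The remaining indices $i\notin\Gamma$ contribute at the next scale, together with the subleading part of $\eta$. Expanding $\eta^{n+1}$ to second order produces the binomial factor $n+1$ multiplying the $\eta$-correction, while the lower-level terms $a_i\xi_i^n$ enter linearly; substituting the leading relation and matching the next order should yield a single linear obstruction whose coefficient is exactly $\sum_{i\in\Gamma}a_i+(n+1)\sum_{i\notin\Gamma}a_i=\sum_{i=1}^s a_i+n\sum_{i\notin\Gamma}a_i$. Hypothesis (2) asserts precisely that this is nonzero, so the subleading terms cannot cancel and the equation fails at that order, the desired contradiction. As $\Gamma$ ranges over all nonempty subsets when the level structure varies, requiring (1)--(2) for \emph{every} nonempty $\Gamma$ is exactly what is needed, and the ``except constant solutions'' clause corresponds to the degenerate case in which all variables collapse to a single level, discarded at the outset.

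I expect the main obstacle to be the rigorous bookkeeping of this level decomposition in terms of $\ueq$ and $\ueqs$. Two points deserve care: first, the structural fact that \emph{comparable} u-equivalent elements are asymptotically equal, so that the $n$-th powers of the top-level $\xi_i$ share a common leading term and their coefficients add to the bare partial sum $\sum_{i\in\Gamma}a_i$ rather than to some weighted sum; and second, the exhaustion of the finitely many ways the scales of the $\xi_i$ and $\eta$ can be arranged, checking that no exotic arrangement of intermediate scales escapes both the leading obstruction (1) and the subleading obstruction (2). Pinning down the exact next-order coefficient, and thereby confirming that it is governed solely by $\Gamma$ and the global sum, is where the hypotheses are consumed and where the argument must be made airtight.
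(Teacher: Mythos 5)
Your overall strategy (contradiction via a $u$-equivalent nonconstant solution of (\ref{eq4})) matches the paper's, but the core of your argument --- an Archimedean, order-of-magnitude stratification with a leading/subleading asymptotic matching --- is not the paper's method and, as sketched, has gaps I do not see how to close. The first is the ``structural fact'' you yourself flag: that comparable $u$-equivalent elements are asymptotically equal. Nothing in the listed properties (a)--(d) of $\ueq$ gives this; $u$-equivalent numbers share every standard first-order property (same residues modulo every $m$, membership in the same sets $A\subseteq\N$), but their ratio is not controlled, and without $\xi_i/\xi_j\approx 1$ for $i,j\in\Gamma$ the top-level sum $\sum_{i\in\Gamma}a_i\xi_i^n$ (with coefficients of mixed sign) need not have order $\lambda^n$ at all, so hypothesis (1) cannot be brought to bear on the leading balance. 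The second, more fatal, problem is the subleading step: even granting $\xi_i=\lambda(1+o(1))$ on the top level, a $(1+o(1))$ control of the leading terms says nothing about the next scale --- the errors $a_i\lambda^n\cdot o(1)$ can be infinitely larger than the contributions of the terms with $i\notin\Gamma$ and of the ``$\eta$-correction'', so no exact linear relation with coefficient $\sum_{i}a_i+n\sum_{i\notin\Gamma}a_i$ can be extracted; indeed, in the purely Archimedean picture $\eta$ has no intrinsic ``subleading part'' to expand around.

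The paper's proof replaces your Archimedean scales by $p$-adic ones for a large prime $p$: write $\xi_i=p^{\nu_i}\zeta_i+\rho$ and $\eta=p^{\mu}\theta+\rho$, where $u$-equivalence forces a common residue $\rho$ modulo $p$ and a common residue $r$ of the units $\zeta_i,\theta$. Reducing modulo $p$ gives $\left(\sum_i a_i\right)\rho^n\equiv\rho^{n+1}$, which splits into two cases: $\rho=0$, handled by comparing exact $p$-adic valuations ($n\nu=(n+1)\mu$ together with $\mu\ueq\nu$ forces $\nu=0$, using hypothesis (1) to prevent cancellation of the top valuation); and $\rho=\sum_i a_i\neq 0$, where the binomial expansion around the \emph{common} residue $\rho$ yields the exact congruence $n\left(\sum_{i\in\Gamma}a_i\right)\equiv(n+1)\sum_i a_i \pmod p$, i.e.\ $\sum_i a_i+n\sum_{i\notin\Gamma}a_i\equiv 0\pmod p$, contradicting (2) once $p$ is large. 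It is these exact congruences modulo powers of $p$ --- available precisely because $u$-equivalent numbers agree modulo every standard integer --- and not asymptotic magnitudes that make the two-scale matching rigorous; your proposal would need to be rebuilt on that foundation.
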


\begin{corollary}\label{cor2}
If $a+b\ne 0$, $(n+1)a+b\ne 0$ and $a+(n+1)b\ne 0$,
then $a x^n+b y^n=z^{n+1}$
is not partition regular on $\N$.
\end{corollary}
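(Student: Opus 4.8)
The plan is to obtain this statement as an immediate specialization of Theorem~\ref{main3}. Take $s=2$ and set $a_1=a$, $a_2=b$; then equation~(\ref{eq4}) reads $a\,x_1^n+b\,x_2^n=y^{n+1}$, which is exactly the equation $a x^n+b y^n=z^{n+1}$ under consideration. It therefore suffices to verify that the two hypotheses of Theorem~\ref{main3} follow from the three assumptions $a+b\ne 0$, $(n+1)a+b\ne 0$ and $a+(n+1)b\ne 0$.

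First I would list the nonempty subsets of $[2]=\{1,2\}$, namely $\{1\}$, $\{2\}$ and $\{1,2\}$, and evaluate hypothesis~(2) on each. Writing $\sum_{i=1}^2 a_i=a+b$, the quantity $\sum_{i=1}^2 a_i+n\sum_{i\notin\Gamma}a_i$ equals $a+b$ for $\Gamma=\{1,2\}$ (the complementary sum being empty), equals $(a+b)+nb=a+(n+1)b$ for $\Gamma=\{1\}$, and equals $(a+b)+na=(n+1)a+b$ for $\Gamma=\{2\}$. Thus hypothesis~(2) is precisely the conjunction of the three stated inequalities, and no further computation is needed to secure it.

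It then remains to check hypothesis~(1), i.e.\ that $\sum_{i\in\Gamma}a_i\ne 0$ for each nonempty $\Gamma$. The three relevant sums are $a$, $b$ and $a+b$: the last is assumed nonzero, while $a\ne 0$ and $b\ne 0$ hold since $a,b$ are the (nonzero) coefficients of the equation. Hence both hypotheses of Theorem~\ref{main3} are met, and that theorem yields that $a x^n+b y^n=z^{n+1}$ is not partition regular on $\N$, save possibly for constant solutions, as claimed.

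I do not expect any real obstacle here, since the corollary is a pure specialization of Theorem~\ref{main3}. The only points requiring care are the bookkeeping that matches the three instances of hypothesis~(2)---one for each nonempty $\Gamma\subseteq[2]$---to the three hypotheses of the corollary, and the tacit assumption that the coefficients $a,b$ are nonzero, which is what makes hypothesis~(1) automatic alongside the given $a+b\ne 0$.
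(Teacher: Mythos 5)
Your proposal is correct and is exactly the intended argument: the paper states Corollary~\ref{cor2} without proof as an immediate specialization of Theorem~\ref{main3} with $s=2$, and your bookkeeping matching the three nonempty $\Gamma\subseteq[2]$ to the three hypotheses is right (including the tacit $a,b\ne 0$ needed for hypothesis~(1)). Your closing phrase ``save possibly for constant solutions'' is in fact the accurate conclusion, since e.g.\ $x=y=z=a+b$ is a constant solution whenever $a+b\ge 1$.
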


\begin{example}
\emph{The equation $x+y=z^2$ is not PR.}\footnote
{~This fact was first proved by 
P. Csikv\'ari, K. Gyarmati and A. S\'ark\"{o}zy in \cite{CGS12}.} 
\end{example}

\smallskip
An interesting case that is not covered 
by the previous results is the following.

\begin{theorem}\label{main4}
Consider the diophantine equation
\begin{equation}\label{eq5}
\sum_{\ell=1}^h a_\ell x^{n_\ell}\ =\ 0
\end{equation}
where where $n_1<\ldots<n_\ell$.
If all coefficients $a_\ell$ are odd, and $h$ is odd,
then (\ref{eq5}) is not partition regular on $\N$.
\end{theorem}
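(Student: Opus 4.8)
The plan is to use that equation (\ref{eq5}) is in the single variable $x$ (it is precisely the ``constant solutions'' that the earlier theorems leave open). For a one-variable equation every solution is trivially monochromatic, so partition regularity is equivalent to the bare existence of a solution $x\in\N$; accordingly, it suffices to prove that $\sum_{\ell=1}^h a_\ell x^{n_\ell}=0$ admits no solution in $\N$. I would do this by a $2$-adic valuation argument, distinguishing the parity of $x$.

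If $x$ is odd, then each $x^{n_\ell}$ is odd and, as every $a_\ell$ is odd, each summand $a_\ell x^{n_\ell}$ is odd; the sum of $h$ odd integers is congruent to $h\pmod 2$, which is nonzero because $h$ is odd. This is the only place the hypothesis that $h$ is odd enters. If instead $x$ is even, set $v=v_2(x)\ge 1$. Since the $a_\ell$ are odd, the $\ell$-th summand has $2$-adic valuation $v_2(a_\ell x^{n_\ell})=n_\ell v$, and because $n_1<\ldots<n_h$ with $v\ge 1$ these valuations are pairwise distinct, the least being attained only at $\ell=1$. Factoring out $2^{n_1 v}$ then leaves $a_1(x/2^v)^{n_1}$, which is odd, plus terms of strictly positive valuation; hence the bracket is odd and $\sum_{\ell=1}^h a_\ell x^{n_\ell}=2^{n_1 v}\cdot(\text{odd})\ne 0$.

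Both cases yield a contradiction, so (\ref{eq5}) has no solution in $\N$ and is therefore not partition regular. I do not anticipate a serious obstacle: in contrast with Theorems \ref{main1}--\ref{main3}, no nonstandard machinery is required, and the only subtlety is the even case, where one must notice that the strict inequalities among the exponents force a unique summand of minimal $2$-adic valuation (the odd case being settled by the parity count that makes ``$h$ odd'' essential).
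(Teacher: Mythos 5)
There is a genuine gap, and it lies in your very first step. The displayed equation in Theorem \ref{main4} contains a typo: it should read $\sum_{\ell=1}^h a_\ell x_\ell^{n_\ell}=0$ with $h$ \emph{distinct} variables $x_1,\ldots,x_h$ (and $n_1<\ldots<n_h$). This is clear from the example immediately following the theorem ($ax+by^2=cz^3$, three variables), from the use made of the theorem in Corollary \ref{cor3} for $x^n+y^m=z^k$ with $n,m,k$ mutually distinct, and from the paper's own proof, which works with $h$ hypernatural numbers $\xi_1,\ldots,\xi_h$. Your reduction ``partition regularity is equivalent to the bare existence of a solution'' is valid only for a genuinely one-variable equation, so it does not apply here: the intended equation typically \emph{does} have solutions in $\N$ (for instance $x+y^2=z^3$ has the solution $x=4$, $y=2$, $z=2$), and the content of the theorem is that some finite colouring admits no monochromatic solution. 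What your computation does establish is that there are no \emph{constant} solutions $x_1=\ldots=x_h$ --- which explains why this theorem, unlike the others, carries no ``except possibly for constant solutions'' caveat --- but that is only a small fragment of the statement.

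That said, your $2$-adic computation is exactly the right germ, and the paper's proof is its nonstandard upgrade. One takes $u$-equivalent numbers $\xi_1\ueq\ldots\ueq\xi_h$ with $\sum_\ell a_\ell\xi_\ell^{n_\ell}=0$; $u$-equivalence forces all the $\xi_\ell$ to have the same parity, and your odd-case count (a sum of $h$ odd terms with $h$ odd is odd) rules out the odd case. In the even case one writes $\xi_\ell=2^{\nu_\ell}\zeta_\ell$ with $\zeta_\ell$ odd, but now the valuations $\nu_\ell$ need not be equal --- they are only $u$-equivalent --- so your observation that the $n_\ell\nu_\ell$ are pairwise distinct no longer comes for free from $n_1<\ldots<n_h$; it requires the $u$-equivalence properties (if $n_\ell\nu_\ell=n_{\ell'}\nu_{\ell'}$ one deduces $\nu_\ell=0$, a contradiction). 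Only then does a unique term of minimal $2$-adic valuation survive modulo $2$ after factoring. Controlling $h$ different valuations that are indiscernible but not equal is precisely the idea your proposal is missing.
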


\smallskip
\noindent
\textbf{Example.}
\emph{The equations $ax+by^2=cz^3$ where $a,b,c$ are odd
are not PR.}

\begin{corollary}\label{cor3}
Equations $x^n+y^m=z^k$ where $k\notin\{n,m\}$
are not partition regular on $\N$,
except for the constant solution $x=y=z=2$
of equations $x^n+y^n=z^{n+1}$.
\end{corollary}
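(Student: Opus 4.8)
The plan is to rewrite the equation as $x^n+y^m-z^k=0$ and to distinguish cases according to whether the two summand exponents coincide. A preliminary observation about constant solutions guides the whole argument: a constant solution is a triple $x=y=z=c$ with $c^n+c^m=c^k$. For $n\ne m$ a short estimate on the sizes of the three powers shows this is impossible for every $c\ge 1$, while for $n=m$ it reduces to $2c^n=c^k$, i.e.\ $c^{\,k-n}=2$, whose only solution is $c=2$, $k=n+1$. Thus it suffices to establish non-partition-regularity (up to constant solutions) in each case; the single surviving constant solution will then be $x=y=z=2$ of $x^n+y^n=z^{n+1}$, as claimed.

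First I would treat the case $n\ne m$, in which $n,m,k$ are three distinct exponents, each carried by a single monomial with coefficient $\pm1$. Here Theorem~\ref{main1} applies with $p=2$. The key point is that modulo $2$ the only nonzero residue is $\rho\equiv1$: condition~(1) is then immediate, since every level consists of the single coefficient $\pm1\not\equiv0\bmod 2$, and condition~(2) collapses to the single evaluation $1^n+1^m-1^k=1\not\equiv0\bmod 2$, i.e.\ to the total coefficient sum being odd. Hence the equation is not PR, and since there are no constant solutions in this case, it is not PR outright.

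When $n=m$ the monomials $x^n,y^n$ merge into one level with coefficient sum $2$, which is even, so condition~(1) of Theorem~\ref{main1} fails at $p=2$ and that theorem is unavailable. If moreover $k\ne n+1$, I would instead invoke Theorem~\ref{main2} with $p=2$, organizing the two levels as $(\text{exponent }n;\ \text{coefficients }1,1)$ and $(\text{exponent }k;\ \text{coefficient }-1)$. Condition~(1) holds because the partial sums $1,1,2$ are nonzero; condition~(2) holds with $\ell'$ the level of $z^k$, since $2\notdivides-1$ and $2\divides2$; and condition~(3) reads $2^{\,k-n}\notdivides2$ when $k>n$, which is exactly the requirement $k\ge n+2$, whereas for $k<n$ condition~(3) is vacuous because the exceptional level is the lowest one. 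The leftover case $n=m$, $k=n+1$ is $ax^n+by^n=z^{n+1}$ with $a=b=1$, covered by Corollary~\ref{cor2}: its three hypotheses become $2\ne0$, $n+2\ne0$, $n+2\ne0$, all true.

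The delicate point, and the very reason the exceptional solution appears, is the boundary case $n=m$, $k=n+1$. There condition~(3) of Theorem~\ref{main2} breaks down because $2^{1}\divides2$, forcing one to fall back on Corollary~\ref{cor2} (i.e.\ Theorem~\ref{main3}), which only rules out solutions up to the constant one --- and this is precisely where $2^n+2^n=2^{n+1}$ survives. Assembling the cases $n\ne m$, then $n=m$ with $k\ne n+1$, and finally $n=m$ with $k=n+1$, together with the constant-solution count above, gives the corollary.
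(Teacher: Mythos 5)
Your proof is correct and follows essentially the same case analysis as the paper's: constant solutions exist only for $n=m$, $k=n+1$ (namely $x=y=z=2$), and the subcases $n=m$ with $k<n$, $k>n+1$, $k=n+1$ are handled via Theorem \ref{main2} (equivalently Corollary \ref{cor1}(ii)) and Corollary \ref{cor2} exactly as in the paper. The only deviation is the case of mutually distinct exponents, where the paper invokes Theorem \ref{main4} while you invoke Theorem \ref{main1} with $p=2$ together with the observed absence of constant solutions; both routes are valid, since the hypotheses of Theorem \ref{main1} do hold there (each level's single coefficient is odd and $1+1-1$ is odd), and the underlying $2$-adic argument is the same.
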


\begin{proof}
If $x=y=z=a$ is a constant solution,
\emph{i.e.} if $a^n+a^m=a^k$, then it is easily
verified that it must be $n=m$, $a=2$ and $k=n+1$.

Assume first $n=m$, and let us consider the prime $p=2$.
If $k<n$, then the thesis is given by (ii) of Corollary \ref{cor1}.
If $k>n+1$ then hypotheses 
of Theorem \ref{main2} are satisfied, and also
in this case we get the thesis.
When $k=n+1$, the thesis follows from Corollary \ref{cor2}.
Finally, if $n,m,k$ are mutually distinct, the thesis is given by
the previous Theorem \ref{main4}.
\end{proof}

\section{The proofs}
For our proofs we will use the methods of \emph{nonstandard analysis}.
We assume the reader to be familiar with its fundamental
notions, most notably the \emph{transfer principle}
and the $\kappa$-\emph{saturation} property.
(Excellent references are the textbook \cite{G98} and
the updated recent monography \cite{LW15}.)
In addition to the usual basic nonstandard tools,
we will consider the following
equivalence relation on the set $\hN$ of 
\emph{hypernatural numbers}. (See \cite{DN15}; see also
\cite{DN15sns} where $u$-equivalent numbers are called \emph{indiscernible}.)

\smallskip
\noindent
\textbf{Definition.}
Two numbers $\xi,\eta\in\hN$ are $u$-\emph{equivalent},
and we write $\xi\,\ueq\,\eta$, 
if $\xi\in\hA\Leftrightarrow \eta\in\hA$ for every $A\subseteq\N$.

\smallskip
Every hypernatural number $\alpha\in\hN$ generates
an ultrafilter on $\N$, namely $\U_\alpha=\{A\subseteq\N\mid \alpha\in\hA\}$.
It is readily seen that $\xi\,\ueq\,\eta$ if and only if they
generate the same ultrafilter $\U_\xi=\U_\eta$.
We will assume the nonstandard model to be sufficiently saturated
($\mathfrak{c}^+$-saturation suffices),
so that every ultrafilter $\U$ on $\N$ is generated
by some element $\alpha\in\hN$.

It is well-known that partition regularity of equations can be characterized
in terms of ultrafilters. Precisely,
an equation $f(x_1,\ldots,x_n)=0$
is \emph{partition regular} on $\N$ if and only if
there exists an ultrafilter $\U$ on $\N$ with the 
property that every $A\in\U$ contains elements $a_1,\ldots,a_n$
that are a solution $f(a_1,\ldots,a_n)=0$.
In the nonstandard setting, such a property can be
equivalently reformulated as follows.

\smallskip
\noindent
\textbf{Theorem.}
\emph{An equation $f(x_1,\ldots,x_n)=0$ is PR on $\N$ if
and only if there exist $u$-equivalent numbers 
$\xi_1\,\ueq\,\ldots\,\ueq\,\xi_n$ such that $f(\xi_1,\ldots,\xi_n)=0$.}

\smallskip
We will use the following three basic properties of $u$-equivalence,
whose proofs can be found in \S 2 of \cite{DN15}.

\begin{enumerate}
\item[(a)]
If $\alpha\,\ueq\,n\in\N$ then $\alpha=n$\,;

\smallskip
\item[(b)]
If $\alpha\,\ueq\,\beta$ then $\hf(\alpha)\,\ueq\,\hf(\beta)$
for every $f:\N\to\N$\,;

\smallskip
\item[(c)]
If $\alpha\,\ueq\,\beta$ and $\alpha<\beta$ then $\beta-\alpha$ is infinite\,;

\smallskip
\item[(d)]
For every $f:\N\to\N$, if $\hf(\alpha)\,\ueq\,\alpha$ then $\hf(\alpha)=\alpha$.
\end{enumerate}

We remark that property (d) is the nonstandard counterpart
of the basic (but nontrivial) property of ultrafilters:
``$f(\U)=\U\Rightarrow\{n\mid f(n)=n\}\in\U$.''

\smallskip
We are now ready to prove the results stated in the previous section.

\smallskip
\begin{proof}[Proof of Theorem \ref{main1}]
By contradiction, assume there exist $u$-equivalent
hypernatural numbers
$\xi_{\ell,i}$ 
such that 
\begin{equation}\label{eq6}
\sum_{\ell=1}^h 
\left(\sum_{i=1}^{s_\ell}a_{\ell,i}\,\xi_{\ell,i}^{n_\ell}\right)\ =\ 0.
\end{equation}

The numbers $\xi_i$ are not all equal to each other,
otherwise by \emph{transfer} we would have a constant
solution to our equation.
Notice also that all numbers $\xi_{\ell,i}$ are infinite as otherwise,
by the properties of $u$-equi\-valence, they would all be equal 
to each other and finite, and they would form a constant solution. 
Fix a sufficiently large prime number $p$,
and write 
$$\xi_{\ell,i}\ =\ p^{\nu_{\ell,i}}\zeta_{\ell,i}+\rho$$
where the $\zeta_{\ell,i}$ are not divisible by $p$.
Numbers $\zeta_{\ell,i}$ are $u$-equivalent
to each other,  and exponents $\nu_{\ell,i}$ are $u$-equivalent
to each other. Notice that all $\nu_{\ell,i}>0$,
since all $\xi_{\ell,i}$ are infinite.
By $u$-equivalence, 
all $\zeta_{\ell,i}\equiv r\mod p$
for a suitable $1\le r\le p-1$.
Now let $\nu_\ell=\min\{\nu_{\ell,i}\mid i\in[s_\ell]\}$.
Notice that $n_\ell\nu_\ell\ne n_{\ell'}\nu_{\ell'}$
for every $\ell\ne\ell'$, as otherwise
$n_\ell\nu_\ell=n_{\ell'}\nu_{\ell'}\,\ueq\,n_{\ell'}\nu_\ell
\Rightarrow n_\ell\nu_\ell=n_{\ell'}\nu_\ell\Rightarrow \nu_\ell=0$,
a contradiction. In consequence, 
there exists a unique $\ell^*$
such that $n_{\ell^*}\nu_{\ell^*}=\min\{n_\ell\nu_\ell\mid\ell\in[h]\}$.
Finally, let 
$\Gamma_{\ell^*}=\{i\in[s_{\ell^*}]\mid \nu_{\ell^*\!,i}=\nu_{\ell^*}\}$.

By reducing (\ref{eq6}) modulo $p$, we obtain
$\sum_{\ell=1}^h 
\left(\sum_{i=1}^{s_\ell}a_{\ell,i}\right)\rho^{n_\ell}\equiv 0\mod p$,
and hence $\rho=0$, by hypothesis (2). 
By factoring out $p^{n_{\ell^*}\nu_{\ell^*}}$ 
in equality (\ref{eq2}), we obtain
\begin{multline*}
0\ =\ \sum_{i\in\Gamma_{\ell^*}}a_{\ell^*\!,i}\zeta_{\ell^*\!,i}^{n_{\ell^*}}+
\sum_{i\notin\Gamma_{\ell^*}}
p^{n_{\ell^*}(\nu_{\ell^*\!,i}-\nu^*)}
a_{\ell^*\!,i}\zeta_{\ell^*\!,i}^{n_{\ell^*}}+
\\
+\sum_{\ell\ne\ell^*}\sum_{i=1}^{s_\ell} 
p^{n_\ell\nu_{\ell,i}-n_{\ell^*}\nu_{\ell^*}}
a_{\ell,i}\zeta_{\ell,i}^{n_\ell}\ \equiv\ 
\left(\sum_{i\in\Gamma_{\ell^*}}a_{\ell^*\!,i}\right)\!r^n\mod p.
\end{multline*}

This is a contradiction, because 
$\sum_{i\in\Gamma_{\ell^*}}a_{\ell^*\!,i}\not\equiv 0\mod p$
by hypotheses (1), and because $r\not\equiv 0\mod p$.
\end{proof}

\medskip
\begin{proof}[Proof of Theorem \ref{main2}]
We argue by contradiction.
Let $p$ be a prime number as given by the hypothesis.
By adopting the same notation, the arguments 
used in the first part of the proof of Theorem \ref{main1}
show that
$\sum_{\ell=1}^h 
\left(\sum_{i=1}^{s_\ell}a_{\ell,i}\right)\rho^{n_\ell}\equiv 0\mod p$,
and so hypothesis (2) guarantees that $\rho=0$. 
By factoring out $p^{\tau}$ 
in equality (\ref{eq6}), we obtain
\begin{equation}\label{eq7}
\sum_{i\in\Gamma_{\ell^*}}a_{\ell^*\!,i}\zeta_{\ell^*\!,i}^{n_{\ell^*}}+
\sum_{i\notin\Gamma_{\ell^*}}
p^{n_{\ell^*}\nu_{\ell^*\!,i}-\tau}
a_{\ell^*\!,i}\zeta_{\ell,i}^{n_{\ell^*}}+
\sum_{\ell\ne\ell^*}\sum_{i=1}^{s_\ell} 
p^{n_\ell\nu_{\ell,i}-\tau}a_{\ell,i}\zeta_{\ell,i}^{n_\ell}\ =\ 0.
\end{equation}

The number $\sum_{i\in\Gamma_{\ell^*}}a_{\ell^*\!,i}\ne 0$
by hypothesis (1), and so we can factor 
$\sum_{i\in\Gamma_{\ell^*}}a_{\ell^*\!,i}=p^{d^*}\!N$ where
$N$ is not divisible by $p$. (It may be $d^*=0$.) By $u$-equivalence,
there exists $1\le R\le p^{d^*+1}-1$ such that
all $\zeta_{\ell,i}\equiv R\mod p^{d^*+1}$. 
Recall that $\zeta_{\ell,i}\equiv r\ne 0\mod p$;
so $N R^{n_{\ell^*}}\not\equiv 0\mod p$ and
\begin{equation}
\sum_{i\in\Gamma_{\ell^*}}a_{\ell^*\!,i}
\zeta_{\ell^*\!,i}^{n_{\ell^*}}\ \equiv\ 
\left(\sum_{i\in\Gamma_{\ell^*}}a_{\ell^*\!,i}\right)\!R^{n_{\ell^*}}\ \equiv\ 
p^{d^*}\!N R^{n_{\ell^*}}\ \not\equiv 0\mod p^{d^*+1}\,.
\end{equation}

Notice that for every $i\notin\Gamma_\ell^*$,
the exponent $n_{\ell^*}\nu_{\ell^*\!,i}-\tau=
n_{\ell^*}(\nu_{\ell^*\!,i}-\nu_{\ell^*})$
is infinite, since $\nu_{\ell^*\!,i}\,\ueq\,\nu_{\ell^*}$
and $\nu_{\ell^*\!,i}>\nu_{\ell^*}$. So, the
second term in (\ref{eq6}) is a multiple of $p^{d^*+1}$.
If $n_\ell\nu_{\ell,i}-\tau$ is infinite
for every $\ell\ne\ell^*$ and for every $i=1,\ldots,n_\ell$,
then also the third term in (\ref{eq7})
is a multiple of $p^{d+1}$. In this case,
the quantity on the left side of equation
(\ref{eq7}) is congruent to $p^{d^*}\!N R^{n_{\ell^*}}
\not\equiv 0\mod p^{d^*+1}$, and so it cannot be
equal to $0$, a contradiction.

Finally, let us suppose that there exist $\ell\ne\ell^*$ 
and $i$ such that 
$n_{\ell}\nu_{\ell,i}-\tau=n_{\ell}\nu_{\ell,i}-n_{\ell^*}\nu_{\ell^*}=e$
is finite. By the properties of $u$-equivalence,
$n_{\ell}\nu_{\ell,i}=n_{\ell^*}\nu_{\ell^*}+e\,\ueq\,
n_{\ell^*}\nu_{\ell,i}+e\Rightarrow
n_{\ell}\nu_{\ell,i}=n_{\ell^*}\nu_{\ell,i}+e\Leftrightarrow 
(n_{\ell}-n_{\ell^*})\nu_{\ell,i}=e$.
Then $\nu_{\ell,i}$ is finite, and hence
$\nu_{\ell',i'}=\nu_{\ell,i}=\nu$ for every $\ell'=1,\ldots,h$ 
and for every $i'=1,\ldots,s_{\ell'}$.
Notice also that it must be $\ell>\ell^*$ and so, by hypothesis (3),
$n_{\ell}\nu_{\ell,i}-\tau=(n_\ell-n_{\ell^*})\nu\ge d^*+1$.
Also in this case, and similarly as above,
we reach a contradiction.
\end{proof}

\medskip
\begin{proof}[Proof of Theorem \ref{main3}]
We proceed similarly as in the proof of the previous theorem.
Assume by contradiction that there exist $u$-equivalent
hypernatural numbers $\xi_i$ and $\eta$
that are not all equal to each other and  such that 
\begin{equation}\label{eq8}
\sum_{i=1}^{s}a_i \xi_i^n\ =\ \eta^{n+1}.
\end{equation}

All numbers $\xi_i,\eta$ are infinite as otherwise,
by the properties of $u$-equi\-valence, they would 
form a constant solution. 
Fix a ``sufficiently large'' prime $p$,
and write
$$\xi_i=p^{\nu_i}\zeta_i+\rho\,;\quad
\eta=p^{\mu}\theta+\rho$$
where $0\le\rho\le p-1$, and where the $\zeta_i$ and $\theta$
are not divisible by $p$.
Notice that all exponents $\nu_i,\mu>0$,
since all $\xi_i$ and $\eta$ are infinite.
By $u$-equivalence, 
$\zeta_i,\theta\equiv r\mod p$
for a suitable $1\le r\le p-1$.
Let $\nu=\min\{\nu_i\mid i\in[s]\}$,
and set $\Gamma=\{i\in[s]\mid \nu_i=\nu\}$.
By reducing (\ref{eq8}) modulo $p$, we obtain
$(\sum_{i=1}^{s}a_i)\rho^n\equiv\rho^{n+1}\mod p$.
It follows that either
$\rho=0$ or $\rho=\sum_{i=1}^s a_i$.

\smallskip
If $\rho=0$ then equation (\ref{eq8}) becomes:
$$p^{n\nu}\left(\sum_{i\in\Gamma}a_i\zeta_i^n+
\sum_{i\notin\Gamma} p^{n(\nu_i-\nu)}a_i\zeta_i^n\right)\ =\ 
p^{(n+1)\mu}\theta^{n+1}$$
where both $\sum_{i\in\Gamma}a_i\zeta_i^n$
and $\theta^{n+1}$ are not divisible
by $p$. But then we have $n\nu=(n+1)\mu\,\ueq\,(n+1)\nu\Rightarrow 
n\nu=(n+1)\nu\Rightarrow \nu=0$, a contradiction. 

\smallskip
Now assume $\rho=\sum_{i=1}^s a_i$.
From equation (\ref{eq8}) we obtain
\begin{multline*}
\sum_{i=1}^{s}a_i(p^{\nu_i}\zeta_i+\rho)^n\ =\ 
\left(\sum_{i=1}^s a_i\right)\!\rho^n+
p^\nu n\,\rho^{n-1}\left(\sum_{s\in\Gamma}a_i\zeta_i\right)+A\ =
\\
=\ (p^{\mu}\theta+\rho)^{n+1}\ =\ 
\rho^{n+1}+
p^\mu(n+1)\rho^n\theta+ B
\end{multline*}
where $p^\nu\,|\,A$ and $p^\mu\,|\,B$.
Notice that 
$(\sum_{i=1}^s a_i)\rho^n=\rho^{n+1}$, and notice also that
$(n+1)\rho^n\theta\equiv(n+1)\rho^n r\not\equiv 0\mod p$. Moreover,
by hypothesis (1), 
$$n\rho^{n-1}\left(\sum_{s\in\Gamma}a_i\zeta_i\right)\ \equiv\ 
n\rho^{n-1}r\left(\sum_{s\in\Gamma}a_i\right)\ \not\equiv\ 0\mod p.$$

So, it must be $\nu=\mu$ and 
$n\rho^{n-1}r(\sum_{s\in\Gamma}a_i)\equiv
(n+1)\rho^n r$. By factoring out
$\rho^{n-1}r$, we obtain
$n(\sum_{s\in\Gamma}a_i)\equiv (n+1)\rho=
(n+1)\sum_{i=1}^n a_i\mod p$, and so
$\sum_{i=1}^n a_i+n\sum_{i\notin\Gamma}a_i\equiv 0\mod p$.
Since we picked a large enough $p$, this contradicts hypothesis (2).
\end{proof}

\medskip
\begin{proof}[Proof of Theorem \ref{main4}]
By contradiction, assume we can pick $u$-equivalent
numbers $\xi_\ell$ such that
$\sum_{\ell=1}^h a_\ell \xi^{n_\ell}=0$.
By $u$-equivalence, the numbers $\xi_\ell$
have the same parity and so they must be even,
since the coefficients $a_\ell$ and the number $h$ of terms are odd. 
Let $\xi_\ell=2^{\nu_\ell}\zeta_\ell$
where exponents $\nu_\ell>0$ are $u$-equivalent,
and where $\xi_\ell$ are $u$-equivalent and odd.
Notice that numbers $n_\ell\nu_\ell$ are mutually distinct.
Indeed, \emph{e.g.}, suppose 
$n_\ell\nu_\ell=n_{\ell'}\nu_{\ell'}$ for some $\ell\ne\ell'$; 
then by the properties of $u$-equivalence,
one would have 
$n_\ell\nu_\ell=n_{\ell'}\nu_{\ell'}\,\ueq\,n_{\ell'}\nu_\ell
\Rightarrow n_\ell\nu_\ell=n_{\ell'}\nu_\ell$,
and hence $\nu_\ell=0$, a contradiction.
Let $n_{\ell^*}\nu_{\ell^*}=\min\{n_\ell\nu_\ell\mid \ell\in[h]\}$.
By factoring out $p^{n_{\ell^*}\nu_{\ell^*}}$ in
$\sum_{\ell=1}^h a_\ell \xi^{n_\ell}=0$, we obtain the contradiction
$$0\ =\ a_{\ell^*} \xi_{\ell^*}^{n_{\ell^*}}+
\sum_{\ell\ne\ell^*}
a_\ell 2^{n_\ell\nu_{\ell}-n_{\ell^*}\nu_{\ell^*}}\xi_\ell^{n_\ell}\ 
\equiv\ a_{\ell^*}\ \equiv\ 1\mod 2.$$
\end{proof}

\section{Final remarks and open questions}

It seems conceivable that the nonstandard techniques
used in this paper could also be applied to
establish the \emph{non}-partition regularity
of wider classes of diophantine equations.
We think that it would be worth persuing
this direction of research.

Another possible direction of research is 
trying to apply our nonstandard arguments
also for the study of PR of equations
in finite fields. Indeed,
it is still an open 
problem the PR of all equations $x^n+y^m=z^k$ on
sufficiently large finite fields $\mathbb{F}_p$
(see \cite{CGS12}).

\smallskip
We close this paper by itemizing the most basic
examples of Fermat-like equations whose
PR on $\N$ is still unknown.

\smallskip
\begin{itemize}
\item 
The Pythagoras equation $x^2+y^2=z^2$\,;

\smallskip
\item 
Equation $x+y^2=z^2$ and, more generally,
equations $x^n+y^m=z^m$ where $n<m$.
\end{itemize}

\smallskip
Let us recall that all equations of the form
$x-y=z^k$ are partition regular.\footnote
{~This is a consequence of the following refined version
of \emph{Furstenberg-S\'ark\"ozy Theorem},
proved by V. Bergelson, H. Furstenberg and R. McCutcheon \cite{BFMC96}:
For every polynomial $P(z)\in\Z[z]$ with no constant term,
for every IP-set $X$, and for every set $A$ of positive
asymptotic density, the intersection
$(A-A)\cap\{P(n)\mid n\in X\}$ is nonempty.
(See also the remarks in \cite{B96} before Question 11.)}

\bigskip

\end{document}